\newtheorem{theorem}{Theorem}
\theoremstyle{plain}
\newtheorem{problem}{Problem}[]
\newtheorem{proposition}{Proposition}
\newtheorem{remark}{Remark}
\numberwithin{equation}{section}
\begin{document}
\title[A generalized Fermat-Torricelli tree on a surface]{A generalized Fermat-Torricelli tree that has acquired a subconscious on a surface}
\author{Anastasios Zachos}

\address{Greek Ministry of Education, Athens, Greece}
 \email{azachos@gmail.com}
 \keywords{weighted Fermat-Torricelli problem , Steiner trees, variation, surface} \subjclass{51E10,
52A10, 52A41, 53C45, 53C22.}
\begin{abstract}
We study a generalized Fermat-Torricelli (S.FT) problem for
infinitesimal geodesic triangles on a $C^{2}$ complete surface $M$ with variable Gaussian curvature $a<K<b,$
for $a, b\in \mathbb{R},$ such that the intersection point (generalized Fermat-Torricelli point) of the three geodesics acquires a positive real number (subconscious).

The solution of the S.FT problem is a generalized Fermat-Torricelli tree with one node that has acquired a subconscious. This solution is based on a new variational method of the length of a geodesic arc with respect to arc length, which coincides with the first variational formula for geodesics on a surface with $K<0,$ or $0<K<c.$
The 'plasticity' solution of the inverse S.FT problem gives a connection of the absolute value of the Gaussian curvature $\|K(F)\|$ at the generalized Fermat-Torricelli point $F$ with the absolute value of the Aleksandrov curvature of the geodesic triangle by acquiring both of them the subconscious of the g.FT point.
\end{abstract}\maketitle

\section{introduction}

Let $\triangle A_{1}A_{2}A_{3}$ be a geodesic triangle on a $C^{2}$ complete surface $M.$
We denote by $w_{i}$ a positive real number (weight), which corresponds to each vertex $A_{i},$ by $l_{A_{i}}(F)$, the geodesic distance from the vertex $A_{i},$ to the point $F,$ for $i=1,2,3.$

The weighted Fermat-Torricelli problem on a $C^{2}$ complete surface $M$ states that:
\begin{problem}
Find a point $F\in M,$
such that:
\begin{displaymath}
 f(F)=w_{1}l_{A_{1}}(F)+w_{2}l_{A_{2}}(F)+w_{3}l_{A_{3}}(F)\to min.
\end{displaymath}

\end{problem}

The inverse weighted Fermat-Torricelli problem on $M$ states that:
\begin{problem}
Given a point $F$ which belongs to the interior of $\triangle A_{1}A_{2}A_{3}$ on $M,$
does there exist a unique set of positive weights $\{w_{1}, w_{2},w_{3}\},$ such
that
\begin{displaymath}
 w_{1}+w_{2}+w_{3}= c =const,
\end{displaymath}
for which $F$ minimizes
\begin{displaymath}
 f(F)=w_{1}l_{A_{1}}(F)+w_{2}l_{A_{2}}(F)+w_{3}l_{A_{3}}(F).
\end{displaymath}
\end{problem}

The solutions w.r to the weighted Fermat-Torricelli problem and an inverse weighted Fermat-Torricelli problem for a $C^{2}$ complete surface  with Gaussian curvature $0<K<c$ or $K<0,$  has been given in \cite{Zachos/Cots:10}, \cite{Cots/Zach:11}.

We mention the necessary and sufficient conditions to locate the weighted Fermat-Torricelli point at the interior of $\triangle A_{1}A_{2}A_{3}$ on a $C^{2}$ complete surface  with Gaussian curvature $0<K<c$ or $K<0:$

\begin{proposition}[Floating Case]\cite{Zachos/Cots:10},\cite{Cots/Zach:11}
If $P$, $Q$ $\in\{A_{1},A_{2},A_{3}\}$ and $\vec{U}_{PQ}$ is the unit tangent
vector of the geodesic arc $PQ$ at P and D is the domain of M
bounded by $\triangle A_{1}A_{2}A_{3},$
 then the following (I), (II), (III) conditions are equivalent:\\

(I) All the following inequalities are satisfied simultaneously:
\begin{equation}\label{cond120n}
\left\| w_{2}\vec{U}_{A_{1}A_{2}}+w_{3}\vec{U}_{A_{1}A_{3}}\right\|> w_{1},
\end{equation}

\begin{equation}\label{cond1202n}
\left\| w_{1}\vec{U}_{A_{2}A_{1}}+w_{3}\vec{U}_{A_{2}A_{3}}\right\|> w_{2},
\end{equation}

\begin{equation}\label{cond1203n}
\left\| w_{1}\vec{U}_{A_{3}A_{1}}+w_{2}\vec{U}_{A_{3}A_{2}}\right\|> w_{3},
\end{equation}
(II) The point $F$ is an interior point of the triangle
$\triangle A_{1}A_{2}A_{3}$ and does not belong to the geodesic arcs
$\gamma_{A_{1}A_{2}},$ $\gamma_{A_{2}A_{3}}$
and $\gamma_{A_{3}A_{1}},$\\

(III) $w_{1}\vec{U}_{FA_{1}}+w_{2}\vec{U}_{FA_{2}}+w_{3}\vec{U}_{FA_{3}}=\vec{0}.$\\

\end{proposition}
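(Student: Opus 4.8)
The plan is to read condition (III) as the vanishing of the gradient of the objective $f$ and to treat conditions (I) and (II) as statements about where this gradient can vanish. The starting point is the first variation formula advertised in the abstract: for an infinitesimal geodesic triangle lying inside a geodesically convex ball (so that each $l_{A_i}$ is smooth away from its own vertex and no cut-locus phenomena occur), the gradient of the geodesic distance $l_{A_i}$ at a point $F\neq A_i$ is $\nabla l_{A_i}(F)=-\vec{U}_{FA_i}$, the unit tangent at $F$ of the geodesic $FA_i$. Hence on the open triangle $\nabla f(F)=-\left(w_{1}\vec{U}_{FA_1}+w_{2}\vec{U}_{FA_2}+w_{3}\vec{U}_{FA_3}\right)$, so $\nabla f(F)=\vec 0$ is exactly (III). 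Under the curvature hypothesis $K<0$ or $0<K<c$ (with the triangle taken small enough), I would first record that $f$ is convex along geodesics and coercive on the relevant convex neighborhood, so that it has a unique minimizer $F$, that this minimizer lies in the closed triangle, and that every interior critical point is the global minimizer. This reduces the whole proposition to locating that minimizer.

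For the equivalence (II) $\Leftrightarrow$ (III): in the open triangle and off the three edges $f$ is differentiable, so the minimizer being an interior point forces $\nabla f=\vec 0$, which is (III); conversely, a point where (III) holds is, by convexity, the global minimizer, and it cannot sit on an edge or at a vertex, hence it is interior, giving (II). The only delicate point here is excluding the relative interiors of the edges: if $F$ lay in the interior of the edge $A_{1}A_{2}$, then $f$ would still be smooth there and stationarity would read $(w_{1}-w_{2})\vec{U}_{FA_1}+w_{3}\vec{U}_{FA_3}=\vec 0$; but $\vec{U}_{FA_1}$ is tangent to the edge while $\vec{U}_{FA_3}$ is transverse to it (because $A_{3}$ lies off the geodesic $A_{1}A_{2}$), so the two terms cannot cancel unless the triangle degenerates. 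I would make this transversality precise and thereby rule out edge minimizers.

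The heart of the argument is (I) $\Leftrightarrow$ (II), which I would obtain through the one-sided directional derivatives of $f$ at the vertices, where $f$ is non-smooth because $l_{A_i}$ has a conical singularity at $A_i$. Fixing a unit-speed geodesic $\sigma$ leaving $A_{1}$ with initial direction $\vec v$, the term $w_{1}l_{A_1}$ contributes directional derivative $+w_{1}$ in every direction, while the smooth part contributes $-\langle w_{2}\vec{U}_{A_1A_2}+w_{3}\vec{U}_{A_1A_3},\vec v\rangle$; minimizing over unit $\vec v$ gives that the least directional derivative of $f$ at $A_{1}$ equals $w_{1}-\left\|w_{2}\vec{U}_{A_1A_2}+w_{3}\vec{U}_{A_1A_3}\right\|$. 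Thus a descent direction out of $A_{1}$ exists precisely when inequality \eqref{cond120n} holds, that is, \eqref{cond120n} is equivalent to ``$A_{1}$ is not the minimizer,'' and symmetrically for \eqref{cond1202n}, \eqref{cond1203n} and the vertices $A_{2},A_{3}$. Consequently (I) holds iff none of the three vertices is the minimizer; combined with the edge-exclusion step and convexity (which together force the unique minimizer to be a vertex, an edge point, or an interior point), this forces the minimizer to be interior, which is (II), and the converse is immediate.

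I expect the main obstacle to be the curvature-dependent analytic foundation rather than the combinatorial balancing: namely, establishing in the regime $0<K<c$ that the triangle is small enough to avoid the cut locus and to guarantee convexity of $f$ along geodesics, so that the first variation formula applies cleanly and the minimizer is unique and confined to the closed triangle. Once that infinitesimal convexity framework is in place, the directional-derivative computation at the vertices and the transversality argument on the edges are routine, and the three-way equivalence follows by assembling the pieces above.
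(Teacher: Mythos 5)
Note first that the paper does not actually prove this proposition: it is imported verbatim from \cite{Zachos/Cots:10} and \cite{Cots/Zach:11}, so there is no in-text argument to compare against line by line. That said, your proof follows essentially the same route as the cited source: convexity of the weighted sum of geodesic distances on a sufficiently small (convex) geodesic triangle under the curvature hypotheses, the identification of (III) with the vanishing of $\nabla f$ via $\nabla l_{A_i}(F)=-\vec{U}_{FA_i}$, and the one-sided directional derivative $w_{i}-\bigl\| w_{j}\vec{U}_{A_iA_j}+w_{k}\vec{U}_{A_iA_k}\bigr\|$ at each vertex to show that inequality (I) is exactly the statement that no vertex is the minimizer. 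The skeleton is sound; the two places you correctly flag as needing care are genuinely the only nontrivial ones, and you should not understate them: (a) existence of a \emph{unique} minimizer requires strict convexity of $f$ (plain convexity plus coercivity does not rule out a segment of minimizers; one uses that the three vertices are not geodesically collinear), and (b) the claim that the minimizer lies in the closed triangle is a separate lemma (via a distance-decreasing projection onto the convex triangle), without which the trichotomy ``vertex, edge point, or interior point'' is incomplete. Your edge-exclusion argument via $\vec{U}_{FA_1}=-\vec{U}_{FA_2}$ and transversality of $\vec{U}_{FA_3}$ is correct and is how the references handle it as well.
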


The solution of the weighted Fermat-Torricelli problem is a weighted tree (Weighted Fermat-Torricelli tree or weighted Steiner tree). The derivative of the weighted length of weighted Fermat-Torricelli trees and weighted Steiner trees on a connected complete Riemannian manifold is calculated in \cite{IvanovTuzhilin:01b} which is a generalization of the first variation formula for the length of geodesics w.r. to arc length (\cite{VToponogov:05}).
The weighted Fermat-Torricelli problem or weighted Steiner problem on a Riemannian manifold is a special case of a
one-dimensional variational problem in which branching extremals are introduced in \cite{IvanovTuzhilin:01}.

In this paper, we provide a new variational method to solve the weighted Fermat-Torricelli problem by assigning a positive number at the weighted Fermat-Torricelli point (g.FT that has acquired a subconscious)
for infinitesimal geodesic triangles on a $C^{2}$ complete surface $M$ with variable Gaussian curvature
$a<K<b,$ for $a, b\in \mathbb{R}.$

This variational method is based on the unified cosine law of Berg-Nikolaev given in \cite{BNik:07} for the K-plane (Sphere $S^{2}_{k}$, Hyperbolic plane $H^{2}_{k}$ and Euclidean Plane $\mathbb{R}^{2}$) and an assertion that the generalized Fermat-Torricelli point is located at three spherical regions or three hyperbolic regions or three plane regions or a combination of spherical, hyperbolic and plane regions with different constant curvatures.
Thus, we may obtain a generalized Fermat-Torricelli tree on a Torus or a surfaces of revolution in $\mathbb{R}^{3},$
having elliptic points ($K>0$,) hyperbolic points ($K<0$) and parabolic points $K=0$.

\section{The generalized Fermat-Torricelli(w.F-T)problem on a $C^{2}$ complete surface $M$ with $a<K<b$}

We denote by $\triangle ABC$ an infinitesimal geodesic triangle on a surface $M,$ by $w_{R}$ a positive real number (weight) which corresponds to each vertex $R,$ for $R\in \{A,B,C\}$ and by $w_{S}$ is a positive real number(weight)
which corresponds to an interior point $F$ of $\triangle ABC.$

The generalized Fermat-Torricelli problem with one node that has acquired unconscious (S.FT problem) states that:

Assume that we select weights $w_{A},$ $w_{B},$ $w_{C},$ such that the g.FT point is located at the interior of $\triangle ABC.$

\begin{problem}
Find the point $F\in M,$ that has acquired a subconscious $w_{S}$
such that:
\begin{equation}\label{minimum}
f(F)=w_{A}l_{A}(F)+w_{B}l_{B}(F)+w_{C}l_{C}(F)\to min.
\end{equation}

\end{problem}

We denote by $\varphi_{Q},$ the angle between the geodesic arcs
$\gamma_{RF}$ and $\gamma_{SF}$ for $Q,R,S\in\{A,B,C\}$
and $Q\ne R\ne S$.

\begin{theorem}\label{floatsol}
If the g.F-T point $F$ is an interior point of the infinitesimal geodesic triangle
$\triangle ABC$ (see figure 1), then each angle $\varphi_{Q},$
$Q\in\{A,B,C\}$ can be expressed as a function of $w_{A},$ $w_{B}$
and $w_{C}:$
\begin{equation}\label{eq:arr}
\cos\varphi_{Q}=\frac{w_{Q}^{2}-w_{R}^{2}-w_{S}^{2}}{2w_{R}w_{S}},
\end{equation}
for every $Q, R, S\in\{A,B,C\},$ $Q\ne R\ne S.$
\end{theorem}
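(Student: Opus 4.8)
The plan is to read the desired identity as the stationarity (optimality) condition for the minimizer $F$ and to extract it from the first variation of the weighted length functional $f$. First I would observe that, because $\triangle ABC$ is infinitesimal, the point $F$ together with the three vertices lies inside a single totally normal (convex) neighborhood, so each geodesic distance $l_R(\cdot)=d(\cdot,R)$, $R\in\{A,B,C\}$, is smooth at $F$ (no cut-locus or conjugate-point obstruction) and the three connecting geodesics $\gamma_{RF}$ are unique and minimizing. Hence $f=w_A l_A+w_B l_B+w_C l_C$ is differentiable at the interior minimizer $F$ and its gradient vanishes there, $\nabla f(F)=\vec 0$.

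The second step supplies the gradients. By the first variation formula for arc length — equivalently the Gauss lemma — the gradient of a geodesic distance is the outward unit tangent,
\begin{equation*}
\nabla_F\, l_R(F)=-\vec{U}_{FR},
\end{equation*}
where $\vec U_{FR}\in T_FM$ is the unit tangent at $F$ of the arc $\gamma_{RF}$ pointing toward $R$. This is exactly the input the paper's new variational method is designed to reproduce, so I would invoke it here, checking that it ``coincides with the first variational formula'' in the sense announced in the abstract. Combining with $\nabla f(F)=\vec 0$ gives the balance relation
\begin{equation*}
w_A\vec U_{FA}+w_B\vec U_{FB}+w_C\vec U_{FC}=\vec 0,
\end{equation*}
which is precisely condition (III) of the Floating-Case Proposition.

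The last step is pure two-dimensional algebra in the tangent plane $T_FM$, regarded as a Euclidean inner-product space. Fixing $Q=A$ (so $R=B$, $S=C$) I would isolate the $A$-term, $w_A\vec U_{FA}=-(w_B\vec U_{FB}+w_C\vec U_{FC})$, take squared norms, and use $\|\vec U_{FR}\|=1$ together with the defining relation $\vec U_{FB}\cdot\vec U_{FC}=\cos\varphi_A$ (since $\varphi_A$ is by definition the angle at $F$ between $\gamma_{BF}$ and $\gamma_{CF}$). This yields $w_A^2=w_B^2+w_C^2+2w_Bw_C\cos\varphi_A$, and solving for the cosine gives the claimed formula; the cases $Q=B$ and $Q=C$ follow by the identical computation with the indices permuted.

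I expect the genuine difficulty to be analytic rather than algebraic: justifying, on a surface of variable curvature $a<K<b$, that $F$ is a smooth critical point and that $\nabla_F d(F,R)=-\vec U_{FR}$ holds, which is exactly where the new variational method must carry the weight. I would also verify that the floating-case inequalities (I) force each right-hand side into $[-1,1]$, so that every $\varphi_Q$ is a bona fide angle and the three balance into $\varphi_A+\varphi_B+\varphi_C=2\pi$. It is worth stressing that the curvature bounds themselves never enter the final identity: the angle condition is purely metric-variational, which is why the same cosine law holds verbatim across the spherical, hyperbolic and Euclidean K-planes of Berg--Nikolaev.
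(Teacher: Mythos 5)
Your proof is correct, but it reaches the cosine formula by a different route than the paper. You invoke the classical fact that the gradient of a geodesic distance in a totally normal neighborhood is the unit tangent (Gauss lemma / first variation formula), obtain the vector balance condition $w_A\vec U_{FA}+w_B\vec U_{FB}+w_C\vec U_{FC}=\vec 0$ (condition (III) of the Floating Case), and then square to get $w_Q^2=w_R^2+w_S^2+2w_Rw_S\cos\varphi_Q$. The paper deliberately does \emph{not} cite the first variation formula: its advertised novelty is to re-derive the directional derivatives $dl_A/dl_B=\cos(\pi-\varphi_C)$ and $dl_C/dl_B=\cos(\pi-\varphi_A)$ from the Berg--Nikolaev unified cosine law, applied to the infinitesimal triangles $\triangle AB'F$ and $\triangle CB'F$ under the assumption that the three sub-triangles $\triangle ABF$, $\triangle BFC$, $\triangle AFC$ lie in regions of (possibly different) constant curvatures $k_3,k_1,k_2$, with a Taylor expansion discarding second-order terms. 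It then assembles the three scalar stationarity equations $w_A+w_B\cos\varphi_C+w_C\cos\varphi_B=0$ and its cyclic permutations (which are exactly the projections of your vector equation onto $\vec U_{FA},\vec U_{FB},\vec U_{FC}$) and solves that linear system for the cosines, rather than taking a norm. Your version buys brevity and rests on a standard, fully rigorous fact valid for any variable curvature inside a normal neighborhood; the paper's version buys self-containedness in its piecewise-constant-curvature model and makes visible why the answer is curvature-independent (the curvature parameters $\kappa_i$ cancel at first order). Your closing remarks --- that the floating-case inequalities guarantee each cosine lies in $[-1,1]$ and that the real analytic content is the smoothness of $l_R$ at $F$ --- are sound and in fact address points the paper leaves implicit.
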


\begin{proof}
Assume that $\triangle ABF,$ $\triangle BFC,$ $\triangle AFC$ belong to a spherical, hyperbolic or planar region
of constant Gaussian curvature $k_{3},$ $k_{1},$ $k_{2},$ for $k_{i}\in \mathbb{R},$ $i=1,2,3.$

We set $l_{A}(B^{\prime})\equiv l_{A}(F)+dl_{A}$

and $l_{B^{\prime}}(F)=dl_{B}.$

We denote by

\begin{displaymath}
\kappa_{i} = \left\{ \begin{array}{ll}
\sqrt{K_{i}} & \textrm{if $K_{i}>0$,}\\
i\sqrt{-K_{i}} & \textrm{if $K_{i}<0$.}\\
\end{array} \right.
\end{displaymath}

The unified cosine law for $\triangle AB^{\prime}F$ is given by:
\begin{equation} \label{eqvar1}
\cos(\kappa_{3} (l_{A}(F)+dl_{A}))=\cos(\kappa_{3} l_{A}(F))\cos(\kappa_{3}
dl_{B})+\sin(\kappa_{3} l_{A}(F))\sin(\kappa_{3}
dl_{B})\cos(\varphi_{C}),
\end{equation}

or

\begin{eqnarray} \label{eqvar1b}
\cos(\kappa_{3}l_{A}(F))\cos(\kappa_{3}dl_{A})-\sin(\kappa_{3}l_{A}(F))\sin(\kappa_{3}dl_{A})=\\\nonumber\cos(\kappa_{3} l_{A}(F))\cos(\kappa_{3}
dl_{B})+\sin(\kappa_{3} l_{A}(F))\sin(\kappa_{3}
dl_{B})\cos(\varphi_{C}),
\end{eqnarray}

By applying Taylor's formula, we obtain:

\begin{equation}\label{eqvar2}
\cos\kappa_{3} dl_{A}=1+o((k_{3}dl_{A})^{2}),
\end{equation}

\begin{equation}\label{eqvar3}
\sin\kappa_{3} dl_{A}=\kappa_{3}dl_{A}+o((k_{3}dl_{A})^{3}),
\end{equation}

\begin{equation}\label{eqvar4}
\cos\kappa_{3} dl_{B}=1+o((k_{3}dl_{B})^{2}),
\end{equation}
and
\begin{equation}\label{eqvar5}
\sin\kappa_{3} dl_{B}=\kappa_{3}dl_{B}+o((k_{3}dl_{B})^{3}).
\end{equation}

By replacing (\ref{eqvar2}), (\ref{eqvar3}),(\ref{eqvar4}),(\ref{eqvar5}) in (\ref{eqvar1b}) and neglecting second order terms, we derive that:

\begin{equation}\label{varphiC}
\frac{dl_{A}}{dl_{B}}=\cos(\pi-\varphi_{C}).
\end{equation}

The unified cosine law for $\triangle CB^{\prime}F$ is given by:
\begin{equation} \label{eqvar1c}
\cos(\kappa_{1} (l_{C}(F)+dl_{C}))=\cos(\kappa_{1} l_{C}(F))\cos(\kappa_{1}
dl_{B})+\sin(\kappa_{1} l_{C}(F))\sin(\kappa_{1}
dl_{B})\cos(\varphi_{A}),
\end{equation}

By applying Taylor's formula, we obtain:

\begin{equation}\label{eqvar2c}
\cos\kappa_{1} dl_{C}=1+o((k_{1}dl_{C})^{2}),
\end{equation}

\begin{equation}\label{eqvar3c}
\sin\kappa_{1} dl_{C}=\kappa_{1}dl_{C}+o((k_{1}dl_{C})^{3}),
\end{equation}

\begin{equation}\label{eqvar4c}
\cos\kappa_{1} dl_{B}=1+o((k_{1}dl_{B})^{2}),
\end{equation}
and
\begin{equation}\label{eqvar5c}
\sin\kappa_{1} dl_{B}=\kappa_{1}dl_{B}+o((k_{1}dl_{B})^{3}).
\end{equation}

Similarly, by replacing (\ref{eqvar2c}), (\ref{eqvar3c}),(\ref{eqvar4c}),(\ref{eqvar5c}) in (\ref{eqvar1c}) and neglecting second order terms, we derive that:

\begin{equation}\label{varphiA}
\frac{dl_{C}}{dl_{B}}=\cos(\pi-\varphi_{A}).
\end{equation}

By differentiating the objective function (\ref{minimum}) w.r. to a parameter $s,$ we get:

\begin{equation}\label{derobj1}
\frac{df}{ds}=w_{A}\frac{dl_{A}}{ds}+w_{B}\frac{dl_{B}}{ds}+w_{C}\frac{dl_{C}}{ds}
\end{equation}

By setting $s=-l_{B}$ and by replacing (\ref{varphiC}) and (\ref{varphiA}) in (\ref{derobj1}), we have:
\begin{equation}\label{equation1var}
w_{A}+w_{B}\cos(\varphi_{C}+w_{C}\cos(\varphi_{B})=0.
\end{equation}
Similarly, by working cyclically and setting the parametrization $s=-l_{C}$ and $s=-l_{A},$
we derive:

\begin{equation}\label{equation2var}
w_{A}\cos\varphi_{C}+w_{B}+w_{C}\cos\varphi_{A}=0,
\end{equation}

\begin{equation}\label{equation3var}
w_{A}\cos\varphi_{B}+w_{B}\cos\varphi_{A}+w_{C}=0,
\end{equation}

and

\[\varphi_{A}+\varphi_{B}+\varphi_{C}=2\pi.\]

The solution of (\ref{equation1var}),
(\ref{equation2var}) and (\ref{equation3var}) w.r. to $\cos\varphi_{Q}$  yields

(\ref{eq:arr}).

\end{proof}

Suppose that $w_{A},$ $w_{B},$ $w_{C}$ are variables and $\varphi_{A},$ $\varphi_{B},$ $\varphi_{C},$
are given.
The solution of (\ref{equation1var}),
(\ref{equation2var}) and (\ref{equation3var}) w.r. to $w_{A}, w_{B}, w_{C}$  yields

a positive answer w.r to the inverse weighted Fermat-Torricelli problem on $M:$

\begin{proposition}\label{propo5}
The solution of the inverse weighted Fermat-Torricelli problem on a surface $M$ is given by:
\begin{equation}\label{inverse111}
w_{Q}=\frac{Constant}{1+\frac{\sin{\varphi_{R}}}{\sin{\varphi_{Q}}}+\frac{\sin{\varphi_{S}}}{\sin{\varphi_{Q}}}},
\end{equation}
for $Q,R,S\in \{A,B,C\}.$
\end{proposition}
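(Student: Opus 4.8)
The plan is to read (\ref{equation1var}), (\ref{equation2var}) and (\ref{equation3var}) as a homogeneous linear system in the unknown weights $\mathbf{w}=(w_{A},w_{B},w_{C})^{T}$, namely $M\mathbf{w}=\mathbf{0}$ with the symmetric coefficient matrix
\[
M=\begin{pmatrix} 1 & \cos\varphi_{C} & \cos\varphi_{B}\\ \cos\varphi_{C} & 1 & \cos\varphi_{A}\\ \cos\varphi_{B} & \cos\varphi_{A} & 1\end{pmatrix};
\]
these three scalar equations are exactly the component form of the vector balance $w_{A}\vec{U}_{FA}+w_{B}\vec{U}_{FB}+w_{C}\vec{U}_{FC}=\vec{0}$. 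First I would record that a nontrivial solution in the weights exists precisely when $\det M=0$. Expanding gives $\det M=1-\cos^{2}\varphi_{A}-\cos^{2}\varphi_{B}-\cos^{2}\varphi_{C}+2\cos\varphi_{A}\cos\varphi_{B}\cos\varphi_{C}$, and I would verify that this vanishes because of the constraint $\varphi_{A}+\varphi_{B}+\varphi_{C}=2\pi$ established in the proof of Theorem \ref{floatsol}: substituting $\cos\varphi_{C}=\cos(\varphi_{A}+\varphi_{B})$ yields the identity $\cos^{2}\varphi_{A}+\cos^{2}\varphi_{B}+\cos^{2}\varphi_{C}-2\cos\varphi_{A}\cos\varphi_{B}\cos\varphi_{C}=1$. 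This both confirms that the system is consistent with a nonzero $\mathbf{w}$ and shows the inverse problem is genuinely solvable.

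Next I would compute the kernel. Since $M$ is symmetric of rank two, $\mathbf{w}$ is proportional to the cross product of any two of its rows; taking the first two, the components come out as $\cos\varphi_{A}\cos\varphi_{C}-\cos\varphi_{B}$, $\cos\varphi_{B}\cos\varphi_{C}-\cos\varphi_{A}$ and $1-\cos^{2}\varphi_{C}=\sin^{2}\varphi_{C}$. The decisive step, and the one place where the work is not purely mechanical, is to simplify the first two using $\varphi_{C}=2\pi-\varphi_{A}-\varphi_{B}$. Writing $\cos\varphi_{C}=\cos\varphi_{A}\cos\varphi_{B}-\sin\varphi_{A}\sin\varphi_{B}$ and factoring, the first component collapses to $-\sin\varphi_{A}\sin(\varphi_{A}+\varphi_{B})=\sin\varphi_{A}\sin\varphi_{C}$ and the second to $\sin\varphi_{B}\sin\varphi_{C}$, so the null vector is $\sin\varphi_{C}\,(\sin\varphi_{A},\sin\varphi_{B},\sin\varphi_{C})$. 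Cancelling the common factor $\sin\varphi_{C}$ gives the clean proportionality
\[
w_{A}:w_{B}:w_{C}=\sin\varphi_{A}:\sin\varphi_{B}:\sin\varphi_{C}.
\]

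Finally I would set $w_{Q}=\lambda\sin\varphi_{Q}$ and fix $\lambda$ from the normalization $w_{A}+w_{B}+w_{C}=c$ carried over from the inverse-problem statement, obtaining $\lambda=c/(\sin\varphi_{A}+\sin\varphi_{B}+\sin\varphi_{C})$. Dividing the numerator and denominator of $w_{Q}=c\sin\varphi_{Q}/(\sin\varphi_{A}+\sin\varphi_{B}+\sin\varphi_{C})$ by $\sin\varphi_{Q}$ rewrites this as (\ref{inverse111}) with $\mathrm{Constant}=c$; positivity of all three weights is automatic in the floating case, where each $\varphi_{Q}\in(0,\pi)$ and hence every sine is positive. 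The main obstacle is the trigonometric reduction of the cofactors, so I would keep in reserve a cleaner geometric route: the vector balance $w_{A}\vec{U}_{FA}+w_{B}\vec{U}_{FB}+w_{C}\vec{U}_{FC}=\vec{0}$ says the three weighted unit vectors close into a triangle of side lengths $w_{A},w_{B},w_{C}$ whose interior angle opposite $w_{Q}$ equals $\pi-\varphi_{Q}$, and the planar law of sines applied to that force triangle gives $w_{Q}\propto\sin(\pi-\varphi_{Q})=\sin\varphi_{Q}$ at once, bypassing the determinant computation entirely.
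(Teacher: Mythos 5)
Your proposal is correct and follows essentially the route the paper intends: the paper simply asserts that solving the linear system (\ref{equation1var})--(\ref{equation3var}) for $w_{A},w_{B},w_{C}$ under the constraint $\varphi_{A}+\varphi_{B}+\varphi_{C}=2\pi$ yields (\ref{inverse111}), and your determinant/cofactor computation (together with the normalization $w_{A}+w_{B}+w_{C}=c$) is exactly the fleshed-out version of that step, arriving at $w_{Q}\propto\sin\varphi_{Q}$. The force-triangle law-of-sines shortcut you keep in reserve is a pleasant alternative but not needed.
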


\begin{remark}
The solution of the inverse weighted Fermat-Torricelli problem on a $C^{2}$ complete surface with Gaussian curvature
$0<K<a$ or $K<0,$ has been derived in \cite{Zachos/Cots:10}, \cite{Cots/Zach:11}.
\end{remark}

The idea of assigning a residual weight (subconscious) at a weighted Fermat-Torricelli point (generalized Fermat-Torricelli point) is given in \cite{Zachos:20}, by assuming that a weighted Fermat-Torricelli tree is a
two way communication network and the weights $w_{A},$ $w_{B},$ $w_{C}$ are three small masses that may move through the branches of the weighted Fermat-Torricelli tree. By assuming mass flow continuity of this network, we obtain the generalized inverse weighted Fermat-Torricelli problem (inverse s.FT problem).

The inverse s.F.T problem is the inverse weighted Fermat-Torricelli problem, such that the weighted Fermat-Torricelli point has acquired a subconscious $w_{S}.$

We denote by $w_{R}$ a mass flow which is transferred from $R$
to $F$ for $R\in \{A,B\},$ by $w_{S}$ a residual weight which
remains at $F$ and by $w_{C}$ a mass flow which is transferred
from $F$ to $C,$ by $\tilde{w_{R}}$ a mass flow which is transferred from
$F$ to $R,$ $R\in \{A,B\},$ and by $\tilde{w_{S}}$ a residual
weight which remains at $F$ and by $\tilde{w_{C}}$ a mass flow
which is transferred from $C$ to $F.$

The following equations are derived by this mass flow along the infinitesimal geodesic arcs $AF,$ $BF,$ $CF:$

\begin{equation}\label{weight1outflow}
w_{A}+w_{B}=w_{C}+w_{S}
\end{equation}

and

\begin{equation}\label{weight2inflow}
\tilde{w_{A}}+\tilde{w_{B}}+\tilde{w_{S}}=\tilde{w_{C}}.
\end{equation}

By taking into account (\ref{weight1outflow}) and (\ref{weight2inflow}) and by
setting $\bar{w_{S}}=w_{S}-\tilde{w_{S}},$ we get:

\begin{equation}\label{weight12inoutflow}
\bar{w_{A}}+\bar{w_{B}}=\bar{w_{C}}+\bar{w_{S}}
\end{equation}

such that:

\begin{equation}\label{weight12inflowsum}
\bar{w_{A}}+\bar{w_{B}}+\bar{w_{C}}=c>0,
\end{equation}

\begin{problem}\label{mixinv5triangle}
Given a point $F$ which belongs to the interior of the inifinitesimal geodesic triangle $\triangle
ABC$ on $M$, does there exist a unique
set of positive weights $\bar{w_{R}},$ such that
\begin{equation}\label{isoptriangle}
 \bar{w_{A}}+\bar{w_{B}}+\bar{w_{C}} = c =const,
\end{equation}
for which $F$ minimizes
\begin{displaymath}
 f(F)=w_{A}l_{A}(F)+w_{B}l_{B}(F)+w_{C}l_{C}(F),
\end{displaymath}
\begin{displaymath}
 f(F)=\tilde{w}_{A}l_{A}(F)+\tilde{w}_{B}l_{B}(F)+\tilde{w}_{C}l_{C}(F),
\end{displaymath}
\begin{displaymath}
 f(F)=\bar{w}_{A}l_{A}(F)+\bar{w}_{B}l_{B}(F)+\bar{w}_{C}l_{C}(F),
\end{displaymath}

\begin{equation}\label{imp1mixtr}
w_{R}+\tilde{w_{R}}=\bar{w_{R}}
\end{equation}
under the condition for the weights:

\begin{equation}\label{cond3mixtr}
\bar{w_{i}}+\bar{w_{j}}=\bar{w_{S}}+\bar{w_{k}}
\end{equation}
for $i,j,k\in {A,B,C}$ and $i\ne j\ne k.$
\end{problem}

\begin{theorem}\label{propomix4triangle}
Given the g.FT point $F$ to be an
interior point of the triangle $\triangle ABC$ with
the vertices lie on three geodesic arcs that meet at $F$ and
from the two given values of $\varphi_{B},$ $\varphi_{C},$ the
positive real weights $\bar{w_{R}}$ given by the formulas

\begin{equation}\label{inversemix42tr}
\bar{w_{A}}=-\left(\frac{\sin(\varphi_{B}+\varphi_{C})}{\sin\varphi_{C}}\right)\frac{c-\bar{w_{S}}}{2},
\end{equation}
\begin{equation}\label{inversemix43tr}
\bar{w_{B}}=\left(\frac{\sin\varphi_{B}}{\sin\varphi_{C}}\right)\frac{c-\bar{w_{S}}}{2},
\end{equation}
and
\begin{equation}\label{inversemix41tr}
\bar{w_{C}}=\frac{c-\bar{w_{S}}}{2}
\end{equation}
give a negative answer w.r. to the inverse s.FT problem on $M.$
\end{theorem}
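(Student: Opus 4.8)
The plan is to pin down the weight ratios from the stationarity conditions of the direct problem, then use the mass-flow bookkeeping to fix the overall scale, and finally expose the sign obstruction that forces the negative answer. Since $F$ is required to be the common minimizer of the three objective functions of Problem \ref{mixinv5triangle}, each admissible triple $(\bar{w}_A,\bar{w}_B,\bar{w}_C)$ must satisfy the stationarity system (\ref{equation1var}), (\ref{equation2var}), (\ref{equation3var}) of Theorem \ref{floatsol}, together with $\varphi_A+\varphi_B+\varphi_C=2\pi$. Viewed as a homogeneous linear system in $(\bar{w}_A,\bar{w}_B,\bar{w}_C)$, its one-dimensional solution space is spanned by $(\sin\varphi_A,\sin\varphi_B,\sin\varphi_C)$, which is exactly the content of (\ref{inverse111}). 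The fastest check is to substitute $\bar{w}_Q=\nu\sin\varphi_Q$ into (\ref{equation1var}) and use $\sin\varphi_A=\sin(2\pi-\varphi_B-\varphi_C)=-\sin(\varphi_B+\varphi_C)$ with the addition formula, whence the equation collapses to an identity. This yields
\[
\frac{\bar{w}_A}{\bar{w}_C}=\frac{\sin\varphi_A}{\sin\varphi_C}=-\frac{\sin(\varphi_B+\varphi_C)}{\sin\varphi_C},\qquad \frac{\bar{w}_B}{\bar{w}_C}=\frac{\sin\varphi_B}{\sin\varphi_C},
\]
which are the angular prefactors of (\ref{inversemix42tr})--(\ref{inversemix41tr}).

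Next I would fix the scale. Subtracting the mass-flow constraint (\ref{cond3mixtr}) written as $\bar{w}_A+\bar{w}_B=\bar{w}_C+\bar{w}_S$ from the normalization (\ref{isoptriangle}) gives $\bar{w}_C=c-\bar{w}_C-\bar{w}_S$, hence $\bar{w}_C=\tfrac{c-\bar{w}_S}{2}$, which is (\ref{inversemix41tr}); multiplying the two ratios above by this value produces (\ref{inversemix43tr}) and (\ref{inversemix42tr}) at once. At this stage the three $\bar{w}_R$ are uniquely determined, and since for an interior g.FT point every $\varphi_Q\in(0,\pi)$, they are all positive (for $\bar{w}_A$ this uses $\varphi_B+\varphi_C=2\pi-\varphi_A>\pi$, so that $\sin(\varphi_B+\varphi_C)<0$).

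The decisive step, which I expect to be the main obstacle, is to show that this forced positive triple nevertheless yields a \emph{negative} answer, i.e. that the underlying residual weights cannot be admissible. Applying the same proportionality to the two constituent flows, minimization forces $w_R=\lambda\sin\varphi_R$ and $\tilde{w}_R=\mu\sin\varphi_R$, with $\lambda,\mu>0$ because all $\sin\varphi_R>0$ and the weights are positive. Feeding these into the outflow relation (\ref{weight1outflow}) and the inflow relation (\ref{weight2inflow}) gives
\[
w_S=\lambda\,(\sin\varphi_A+\sin\varphi_B-\sin\varphi_C),\qquad \tilde{w}_S=-\mu\,(\sin\varphi_A+\sin\varphi_B-\sin\varphi_C),
\]
so that $w_S\,\tilde{w}_S=-\lambda\mu\,(\sin\varphi_A+\sin\varphi_B-\sin\varphi_C)^2\le 0$. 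Hence, whatever the angles, the residual weights $w_S$ and $\tilde{w}_S$ are opposite in sign and cannot both be strictly positive; when $\sin\varphi_A+\sin\varphi_B-\sin\varphi_C=0$ they both vanish, again violating the positivity of the subconscious. This incompatibility is precisely what obstructs an admissible positive solution, so although the formulas (\ref{inversemix42tr})--(\ref{inversemix41tr}) are forced, the inverse s.FT problem admits no genuine positive-weight solution, i.e. a negative answer. The only delicate point is to extract the two proportionality constants correctly and to track the inflow-versus-outflow orientation that flips the sign of $\sin\varphi_A+\sin\varphi_B-\sin\varphi_C$ between the two flows.
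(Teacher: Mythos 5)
The first thing to note is that the paper itself contains no proof of this theorem: the statement is followed only by a remark deferring to \cite{Zachos:20}, where the result is proved for $\mathbb{R}^{2}$, so your proposal can only be judged on its own terms. Your derivation of the three formulas is correct and uses exactly the ingredients the paper sets up: the stationarity system (\ref{equation1var})--(\ref{equation3var}) forces $\bar{w}_{Q}\propto\sin\varphi_{Q}$, the identity $\sin\varphi_{A}=-\sin(\varphi_{B}+\varphi_{C})$ converts this into the stated angular prefactors, and combining (\ref{isoptriangle}) with the $k=C$ instance of (\ref{cond3mixtr}) gives $\bar{w}_{C}=(c-\bar{w}_{S})/2$. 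The positivity check is also right.

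The gap is in the final step, where you locate the ``negative answer'' in the sign obstruction $w_{S}\,\tilde{w}_{S}\le 0$. The computation is correct --- in fact $\sin\varphi_{A}+\sin\varphi_{B}-\sin\varphi_{C}=-4\sin(\varphi_{C}/2)\cos(\varphi_{B}/2)\cos\bigl((\varphi_{B}+\varphi_{C})/2\bigr)>0$ for an interior g.FT point, so $\tilde{w}_{S}<0$ strictly --- but it is unlikely to be the intended obstruction, because the paper's own construction of the subconscious presupposes exactly this sign pattern: $\bar{w}_{S}=w_{S}-\tilde{w}_{S}$ is positive precisely because $\tilde{w}_{S}$ is a deficit rather than a surplus, and Problem \ref{mixinv5triangle} demands positivity only of the $\bar{w}_{R}$, not of $w_{S}$ and $\tilde{w}_{S}$ separately. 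The reading consistent with the ``plasticity'' language of the abstract and with \cite{Zachos:20} is that the answer is negative for lack of \emph{uniqueness}, not of existence: writing $w_{R}=\lambda\sin\varphi_{R}$ and $\tilde{w}_{R}=\mu\sin\varphi_{R}$ as you do, every constraint of Problem \ref{mixinv5triangle} depends on $\lambda$ and $\mu$ only through $\lambda+\mu$, so the decomposition $\bar{w}_{R}=w_{R}+\tilde{w}_{R}$ admits a one-parameter family of admissible splittings and the required unique set of weights does not exist. (A second possible source of the negative answer, if (\ref{cond3mixtr}) is meant to hold for all permutations of $i,j,k$, is that it would force $\bar{w}_{A}=\bar{w}_{B}=\bar{w}_{C}$, which is incompatible with generic angles.) Your concluding argument should be replaced by, or at least supplemented with, the non-uniqueness observation, which is already implicit in your own parametrization by $\lambda$ and $\mu$.
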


\begin{remark}
Theorem~2 is proved in \cite{Zachos:20} for the case of $\mathbb{R}^{2}.$
\end{remark}

We conclude with an evolutionary scheme of infinitesimal geodesic triangles, which connects the subconscious
of a weighted Fermat-Torricelli tree with the Aleksandrov curvature of a geodesic triangle (\cite{Alexandrov:96},\cite{BNik:07})).

Phase~1 At time zero, we assume that a point $F$ in $\mathbb{R}^{3}$ tends to split in three directions.
It acquires a subconscious which equals with the absolute value of the Gaussian curvature $\|K(F)\|$ and predermines the surface with Gaussian curvature $K,$ on which these three geodesic arcs will move (Weighted Fermat-Torricelli tree).

Phase~2 After time $t,$ the subconscious quantity is increased and the value of the Aleksandrov curvature of the infinitesimal geodesic triangle $T$ is reached:
\[\bar{w_{S}}=\|K(T)\|=\|\angle A+\angle B+\angle C-\pi\|.\]

The following equations determine the values of $\bar{w}_{A},$ $\bar{w}_{B}$ and $\bar{w}_{C}:$

\[\bar{w_{A}}=-\left(\frac{\sin(\varphi_{B}+\varphi_{C})}{\sin\varphi_{C}}\right)\frac{1-\bar{w_{S}}}{2},\]

\[\bar{w_{B}}=\left(\frac{\sin\varphi_{B}}{\sin\varphi_{C}}\right)\frac{1-\bar{w_{S}}}{2},\]

and

\[\bar{w_{C}}=\frac{1-\bar{w_{S}}}{2}.\]

Phase~2 gives a plasticity solution of the inverse s.FT problem that has acquired a subconscious $\bar{w_{S}}=\|K(\triangle ABC)\|.$


\end{document}